\begin{document}
\mainmatter
\title{Cardinal-Recognizing Infinite Time Turing Machines}
\author{Miha E. Habič}
\institute{The Graduate Center of the City University of New York, Mathematics Program,\\
365 Fifth Avenue, New York, NY 10016, USA\\
\email{mhabic@gc.cuny.edu}}
\maketitle

\begin{abstract}
We introduce a model of infinitary computation which enhances the infinite time Turing machine
model slightly but in a natural way by giving the machines the capability of detecting
cardinal stages of computation.
The computational strength with respect to ITTMs is determined to be precisely that of 
the strong halting problem
and the nature of the new characteristic ordinals (clockable, writable, etc.) is explored.
\keywords{infinite computation, infinite time Turing machine, length of computation}
\end{abstract}

Various notions of infinitary computability have now been studied for several decades. The concept that we can somehow utilize infinity
to accommodate our computations is at the same time both appealing and dangerous; appealing, since we are often in a position where we could answer some question if only we could
look at the output of some algorithm after an infinite amount of steps, and dangerous, since this sort of greediness must inevitably lead to disappointment when we suddenly
reach the limits of our model. At that point we must decide whether to push on and strengthen our model in some way or to abandon it in favour of some (apparently) alternative model.
But if we do not wish to abandon our original idea, how to strengthen it in such a way that it remains both interesting and intuitive?

The behaviour of infinite time Turing machines (ITTMs), first introduced in \cite{hamkinslewis-ittm:2000}, has by now been extensively explored and various characteristics have been determined. While
we are far from reaching full understanding of the model, we nevertheless already feel the urge to generalize further. Perhaps the most direct generalization are the ordinal Turing
machines of \cite{koepke-otm:2005}, where both the machine tape and running time are allowed to range into the transfinite. But perhaps this modification seems too strong; with it
all constructible sets of ordinals are computable. We would be satisfied with the minimal nontrivial expansion of the ITTM model, i.e.\ something which computes the appropriate
halting problem but no more. Of course, we can easily achieve this goal within the ITTM framework by considering oracle computations, but this somehow doesn't seem satisfactory.
We intend to give what we feel is a more natural solution to this problem but which falls short of the `omnipotence' of ordinal Turing machines.
%

We assume some familiarity with the concepts and notation related to ITTMs and computability
theory in general. In particular, we fix at the outset some uniform way of coding programs,
whole machine configurations and countable ordinals as reals (i.e.\ infinite binary sequences).
Given a code \(p\) for a program, \(\varphi_p(x)\) denotes the computation of \(p\) with \(x\)
as input, while \(\varphi_p(x)\downarrow\) means that this computation halts. By the output of
a computation stabilizing, we mean that from some time onward the contents of the output tape
do not change during that computation (but the computation itself need not halt or even be
aware of the stabilization). An ordinal is \emph{clockable} if it is the halting time of some
ITTM computation with empty input; the supremum of the clockable ordinals is denoted
\(\gamma\). A real is \emph{writable} if it is the output of some halting ITTM computation with
empty input, \emph{eventually writable} if it is the output of a stabilized ITTM computation
with empty input, and \emph{accidentally writable} if it appears on any tape at any time
during an ITTM computation with empty input. An ordinal is (eventually/accidentally) writable
if the real coding it is such. The suprema of the writable/eventually writable/accidentally
writable ordinals are denoted \(\lambda,\zeta\) and \(\Sigma\), respectively.

\section{The Model and Its Computational Power}

We build on the standard ITTM framework in which the machines have three tapes and cell values at limit stages are calculated according to the \(\limsup\) rule.
Our proposed model has the same hardware and behaviour, with one exception: there is a special state, called the \textit{cardinal} state, which is used instead of the \textit{limit}
state at cardinal stages of the computation. To be precise, if \(\kappa\) is an uncountable\footnote{We restrict to uncountable cardinals mainly to ensure that verbatim copies of ITTM
programs work as expected. There is no difference in power between this convention and using the \textit{cardinal} state at all cardinal stages of computation.} 	
cardinal, the configuration of our machine at stage \(\kappa\) is as follows: the head is on the first cell of the tape, the machine is in the \textit{cardinal} state and the cell
values are the \(\limsup\) of the previous values. The machine handles non-cardinal limit ordinal stages like an ordinary ITTM. We call these machines \emph{cardinal-recognizing
infinite time Turing machines} (CRITTMs). We also define CRITTM-computable functions on 
Cantor space \(2^\omega\) and CRITTM-(semi)decidable subsets of \(2^\omega\) analogously
to the ITTM case.

Our first task should be to verify that our proposed machines actually possess the computing power we desired of them. Recall the characteristic undecidable problems of
ITTM computation:
\begin{itemize}
\item the weak halting problem \(0^\triangledown=\{p;\varphi_p(0)\downarrow\}\),
\item the strong halting problem \(0^\blacktriangledown=\{(p,x);\varphi_p(x)\downarrow\}\),
\item the stabilization problem \(S=\{(p,x);\text{ the output of \(\varphi_p(x)\) stabilizes}\}\).
\end{itemize}

\begin{proposition}
\label{prop:halting decidable}
The sets \(0^\triangledown,0^\blacktriangledown\) and \(S\) are CRITTM-decidable.
\end{proposition}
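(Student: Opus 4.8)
The plan is to exploit the one genuinely new feature of the model---the ability to detect when a computation has reached an uncountable cardinal stage---together with the fundamental fact that every aspect of an ITTM computation that is going to settle does so at a countable stage. Concretely, all three problems will be decided by the first cardinal stage \(\omega_1\): I simulate the computation \(\varphi_p(x)\) verbatim, monitor the relevant behaviour below \(\omega_1\), and read off the answer the moment the machine is thrown into the \emph{cardinal} state. The footnote's remark that verbatim copies of ITTM programs behave as expected is exactly what makes this sound: since no countable limit ordinal is an uncountable cardinal, a CRITTM running an ITTM program passes through precisely the same configurations at every countable stage, so the simulation is faithful and the simulator's clock agrees with the target's clock all the way up to \(\omega_1\).

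For the strong halting problem, on input \((p,x)\) I would run a faithful simulation of \(\varphi_p(x)\) equipped with a watchdog that branches on two mutually exclusive events: the simulated machine entering its halting state, or the simulator being placed in the cardinal state. In the first case we accept, in the second we reject, halting in either case. Correctness rests on the standard fact that the clockable ordinals, and their relativizations to a real input \(x\), are countable: if \(\varphi_p(x)\!\downarrow\) then it halts at some stage below \(\omega_1\) and the watchdog sees the halting state first, whereas if \(\varphi_p(x)\) diverges the simulation survives to stage \(\omega_1\) and is detected by the cardinal state. The weak halting problem \(0^\triangledown\) is then immediate, being the restriction of \(0^\blacktriangledown\) to inputs of the form \((p,0)\).

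The stabilization problem \(S\) needs slightly more. A computation that has already halted has stable output, so halting instances are accepted exactly as above; the real question is whether a \emph{divergent} computation has a stabilizing output. Here I would augment the simulation with the classical flag gadget: reserve an extra cell \(f\), set \(f:=1\) at any step at which the output tape changes and \(f:=0\) otherwise, so that by the \(\limsup\) rule the value of \(f\) at a limit records whether the output changed cofinally below that limit, the bookkeeping being arranged so as to preserve this limit behaviour. Reading \(f\) when the cardinal state is entered at \(\omega_1\) thus tells us whether the output changed cofinally in \(\omega_1\): if not, we accept, and if so, we reject. The justification that this decides \(S\) correctly is the one structural input the argument really needs---Welch's analysis of the eventual behaviour of ITTM computations. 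For countable input \(x\) the onset and period of the terminal loop of \(\varphi_p(x)\) are bounded by the countable ordinals \(\zeta^x\) and \(\Sigma^x\); consequently, if the output stabilizes it has already stabilized below \(\omega_1\), while if it does not stabilize the loop forces the output to change within every period, and since the periods are cofinal in \(\omega_1\) the changes are cofinal there as well.

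The genuinely delicate point, and the one I would spend the most care on, is this last equivalence for \(S\): everything else is the routine packaging of a faithful simulation together with a state-detecting watchdog and a standard change-detecting flag. I would therefore organize the write-up so that the halting cases are dispatched quickly and the bulk of the verification is the reduction of ``the output of \(\varphi_p(x)\) stabilizes'' to ``the output of \(\varphi_p(x)\) is constant on a tail below \(\omega_1\)'', invoking the recurrence theorem to rule out the possibility that a divergent computation changes its output cofinally below \(\omega_1\) yet settles afterwards.
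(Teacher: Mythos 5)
Your proposal is correct and follows essentially the same route as the paper: the heart of both arguments is deciding \(S\) by simulating \(\varphi_p(x)\) with a flag that records output changes and reading that flag at the first \emph{cardinal} state, justified by the fact that every ITTM computation halts or begins repeating at a countable stage. The only (inessential) difference is organizational: you decide \(0^\blacktriangledown\) directly with a halting-versus-cardinal-state watchdog, whereas the paper reduces \(0^\blacktriangledown\) to \(S\) via a flag flashed after every instruction.
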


\begin{proof}
The decidability of \(0^\triangledown\) is clearly reducible to the decidability of \(0^\blacktriangledown\). Similarly, the decidability of \(0^\blacktriangledown\) is reducible
to the decidability of \(S\): given a program \(p\) and an input \(x\), construct a new program \(p'\) which acts like \(p\) but also flashes a designated cell (called a \emph{flag}) after
completing each instruction of \(p\); the program \(p\) halts on \(x\) iff \(p'\) 
stabilizes on \(x\).

It therefore remains to show that \(S\) is CRITTM-decidable. Consider the following algorithm: given a pair \((p,x)\), simulate \(\varphi_p(x)\) and flash a flag each time the
simulated output changes. When a \textit{cardinal} state is attained, output `no' if the flag is showing 1 and `yes' if it is showing 0. This algorithm decides
\(S\). Indeed, recall that every ITTM computation either halts or begins repeating at some countable time and so the question of stabilization is completely settled by (or before)
time \(\omega_1\) when the \textit{cardinal} state is first reached.
\qed
\end{proof}


In the above proof we made use of the fact that every ITTM computation halts or begins
repeating before time \(\omega_1\). An analogous property also holds for CRITTM computations.
To state this correctly we must, as in the ITTM case, clarify what we mean by an infinite computation repeating. In the
ITTM model this is the case when the machine configuration is the same at two limit ordinal stages \(\alpha<\beta\) and no cell that is showing 0 at stage \(\alpha\) ever
shows a 1 between these two stages. This configuration will then repeat (at least) \(\omega\) many times and the last clause ensures that the configuration at the limit of
these repeats will again be the same. The machine then has no alternative but to continue its sisyphean task. 

With the appropriate modifications the same description of repeating also works in the case of CRITTMs. Specifically, we say that a CRITTM computation repeats if the machine
configuration is the same at two cardinal stages \(\kappa<\lambda\) and no cell that is showing 0 at stage \(\kappa\) ever shows a 1 between these two stages. Note that we do not
require the stages \(\kappa\) and \(\lambda\) to be limit cardinals. This is not needed since the same argument as before shows that the machine cannot escape this
strong repeating pattern: the once repeated configuration will repeat again \(\omega\) many times\footnote{Note that the length of time between repeats gets progressively longer.
It will follow from later results that this additional time cannot be used in any meaningful way.} 
and the limit configuration will be the same.

\begin{proposition}
\label{prop:repeating time}
Every CRITTM computation halts or begins repeating before time \(\aleph_{\omega_1}\).
\end{proposition}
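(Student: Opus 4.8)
The plan is to mirror the argument that every ITTM computation halts or begins repeating before \(\omega_1\), transporting it upward so that the uncountable cardinal stages play the role previously played by the limit stages. The uncountable cardinals below \(\aleph_{\omega_1}\) are exactly the \(\aleph_\alpha\) for \(1\le\alpha<\omega_1\); they form a closed unbounded subset of \(\aleph_{\omega_1}\), and \(\operatorname{cf}(\aleph_{\omega_1})=\omega_1\). Assuming a computation has not halted by stage \(\aleph_{\omega_1}\), I would aim to exhibit two uncountable cardinal stages \(\kappa<\lambda<\aleph_{\omega_1}\) at which the configuration agrees and across which no cell showing \(0\) at \(\kappa\) ever shows \(1\); by the definition of repeating for CRITTMs this is exactly what is required.

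The key is to control the countably many cells. First I would classify each cell according to its behaviour along the whole run up to \(\aleph_{\omega_1}\): eventually constantly \(0\), eventually constantly \(1\), or \emph{churning}, meaning it shows both values cofinally in \(\aleph_{\omega_1}\). Because there are only countably many cells and \(\operatorname{cf}(\aleph_{\omega_1})=\omega_1>\omega\), the supremum of the (countably many) stabilization times of the eventually-constant cells is some \(\delta<\aleph_{\omega_1}\); past \(\delta\) the eventually-\(0\) cells show \(0\) and the eventually-\(1\) cells show \(1\) at every stage. For a churning cell \(c\) the set \(B_c\) of stages at which it shows \(1\) is unbounded in \(\aleph_{\omega_1}\), so its set of limit points is club; intersecting with the club of uncountable cardinals and invoking the \(\limsup\) rule shows that \(c\) displays \(1\) at every cardinal stage in a certain club \(G_c\). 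Intersecting the countably many clubs \(G_c\) (legitimate since \(\operatorname{cf}(\aleph_{\omega_1})=\omega_1\)) and discarding the stages below \(\delta\) yields a club \(E\) of uncountable cardinal stages at which every churning cell reads \(1\) as well.

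At each stage in \(E\) the tape therefore carries exactly the characteristic function of the cells that are eventually \(1\) together with the churning cells, the head sits on the first cell, and the machine is in the \textit{cardinal} state, so the configuration is literally constant along \(E\). I would then pick any \(\kappa<\lambda\) in \(E\): the configurations agree, and the cells showing \(0\) at \(\kappa\) are precisely the eventually-\(0\) cells, which by the choice of \(\delta\) never again show \(1\), so the monotonicity clause holds automatically throughout \([\kappa,\lambda)\). This produces the desired repeat well below \(\aleph_{\omega_1}\). I expect the main obstacle to be the treatment of the churning cells: one must argue that, despite their perpetual oscillation, there are club-many cardinal stages at which they all simultaneously read \(1\), and one must check that both uses of \(\operatorname{cf}(\aleph_{\omega_1})=\omega_1\) — bounding countably many stabilization times and closing the club filter under countable intersections — remain valid even though \(\aleph_{\omega_1}\) is singular.
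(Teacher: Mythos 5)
Your proof is correct and is essentially the paper's argument in different packaging: the paper also splits cells into stabilized and non-stabilized ones, uses \(\operatorname{cf}(\aleph_{\omega_1})=\omega_1>\omega\) to bound the countably many stabilization times, and observes that the \(\limsup\) rule forces every non-stabilized cell to show \(1\) at a suitable limit of cardinal stages, producing two agreeing cardinal configurations with the required no-\(0\)-to-\(1\) clause. Where you extract a club \(E\) of identical configurations by intersecting countably many clubs, the paper builds an explicit increasing \(\omega\)-sequence \((\alpha_n)\) and takes \(\aleph_{\sup_n\alpha_n}\) twice; these are the same cofinality argument.
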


\begin{proof}
The proof is very similar to the one in the ITTM case. Assume that the machine hasn't halted by time \(\aleph_{\omega_1}\). By a cofinality argument there is a countable
ordinal \(\alpha_0\) such that all of the cells that will have stabilized before time \(\aleph_{\omega_1}\) have already done so by time \(\aleph_{\alpha_0}\). Similarly, there is a
countable ordinal \(\alpha_1>\alpha_0\) such that each of the nonstabilized cells will have flipped its value at least once between \(\aleph_{\alpha_0}\) and \(\aleph_{\alpha_1}\).
Construct the increasing sequence of ordinals \((\alpha_n)_{n<\omega}\) in this way and let 
\(\alpha_\omega=\sup_n\alpha_n<\omega_1\). Consider the configuration of the machine at time 
\(\aleph_{\alpha_\omega}\). The stabilized cells are showing their values and the nonstabilizing cells have flipped unboundedly many times below \(\aleph_{\alpha_\omega}\) and
are thus showing 1. This is the same configuration as at time \(\aleph_{\omega_1}\). If we now repeat the construction starting with some \(\alpha_0'\) above \(\alpha_\omega\),
we find the same configuration at some time \(\aleph_{\alpha_\omega'}\). Then, by construction, the computation between \(\aleph_{\alpha_\omega}\) and \(\aleph_{\alpha_\omega'}\)
repeats.
\qed
\end{proof}

We conclude this section by determining the precise computational power of CRITTMs. Recall that we introduced them as an attempt to define a minimal natural strengthening of the ITTM
model which could decide the strong ITTM halting problem without explicitly adding a halting oracle. It turns out that we hit our mark perfectly as the following theorem shows.

\begin{theorem}
CRITTMs compute the same functions as ITTMs with a \(0^\blacktriangledown\) oracle. In this sense the two models are computationally equivalent.\footnote{This theorem gives
an alternative proof (and was originally conceived by observing) that the sets \(0^\blacktriangledown\) and \(S\) are infinite time Turing equivalent; the strong halting oracle
allows us to foresee repeating patterns.}
\end{theorem}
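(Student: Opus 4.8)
The plan is to prove the two inclusions separately: every function computable by an ITTM with a $0^\blacktriangledown$ oracle is CRITTM-computable, and conversely every CRITTM-computable function is computable by an ITTM with a $0^\blacktriangledown$ oracle.

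For the first inclusion I would exploit Proposition~\ref{prop:halting decidable}, which provides a \emph{halting} CRITTM deciding $0^\blacktriangledown$. Since a CRITTM can run any ITTM program verbatim, to simulate an ITTM-with-oracle computation I would interleave ordinary ITTM steps with calls to this decider: whenever the simulated machine poses an oracle query $(p,x)$, the CRITTM suspends the simulation, runs the $0^\blacktriangledown$-decider as a halting subroutine, records the answer, and resumes. The one point needing care is that each subroutine consumes a cardinal stage, so the real CRITTM clock is pushed past $\omega_1,\omega_2,\dots$ as the simulation proceeds, and a cardinal stage may fall in the middle of the main loop. To make the simulation insensitive to this, I would store the entire simulated configuration, including the simulated head position and state, on the tape, so that the $\limsup$ rule recovers it at every limit, and have the cardinal-state entry point simply reconstruct it and continue. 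Since the simulated computation halts in countable time and hence issues only countably many queries, the whole process terminates well before $\aleph_{\omega_1}$.

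The reverse inclusion is the substantial direction, and it is where the oracle does its real work. Here I would have an ITTM-with-$0^\blacktriangledown$ simulate a CRITTM by tracking only its configurations $C_\alpha$ at the cardinal stages $\aleph_\alpha$. The obstacle is that the machine must cross each uncountable segment $[\aleph_\alpha,\aleph_{\alpha+1})$ in bounded bookkeeping time, since the whole simulation has only countably many real steps at its disposal. On such a segment the CRITTM runs as a plain ITTM from the cardinal-state configuration $C_\alpha$, so by the usual ITTM analysis it either halts or begins repeating before the next cardinal; the configuration at $\aleph_{\alpha+1}$ is then the $\limsup$ over the cofinal tail of that segment, whose value at each cell is $1$ exactly when that cell is not eventually $0$. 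Both the halting question for the segment and the eventual value of each cell are infinite time Turing reducible to $0^\blacktriangledown$ (this is the equivalence of $0^\blacktriangledown$ and $S$ alluded to in the footnote), so a single sweep of oracle queries computes $C_{\alpha+1}$ from $C_\alpha$. The crucial leverage of Proposition~\ref{prop:repeating time} is that this sequence of cardinal-stage configurations either reaches a halt or repeats at some $\aleph_{\beta_0}$ with $\beta_0<\omega_1$; since every ordinal below $\omega_1$ is countable, the simulation need only iterate the successor step through the countable ordinal $\beta_0$ before it detects the halt or the repeat and returns its verdict.

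The genuinely delicate step is the behaviour at a limit cardinal stage $\aleph_\delta$, where the configuration is the $\limsup$ over the entire preceding history rather than over the cardinal-stage configurations alone: cell $c$ shows $1$ there precisely when $c$ takes the value $1$ at some point of cofinally many segments below $\delta$, and for limit $\delta$ no single segment is cofinal, so cross-segment information is genuinely required. The plan is to let the simulating ITTM's own $\limsup$ rule perform this computation. For each segment I would use the oracle to determine the set of cells that take the value $1$ somewhere in that segment, a question of the form ``does the auxiliary program that halts the instant cell $c$ first shows $1$ halt?'' and hence again a $0^\blacktriangledown$ query, and flash these cells on a dedicated track according to the standard flashing-flag protocol, so that the native $\limsup$ taken at the simulation's own limit stages, aligned with the limit values of $\delta$, reproduces exactly the cross-segment $\limsup$ demanded by the CRITTM semantics. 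Verifying that this alignment and flashing discipline really yields the correct limit configuration is the main obstacle; once it is in place the two simulations match step for cardinal step, and combined with the first inclusion the equivalence follows.
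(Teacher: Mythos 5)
Your proposal is correct in outline, but the substantial direction is organized quite differently from the paper's, and the difference matters. The paper's simulation does \emph{not} compute the cardinal-stage configurations by cell-by-cell \(\limsup\) queries; it simulates the CRITTM step by step \emph{inside} each segment \([\aleph_\alpha,\aleph_{\alpha+1})\) and, before each simulated step, asks the oracle a single pure halting question: does the auxiliary ITTM program \(p_*\) --- which runs the de-cardinalized program \(\tilde p\) from the current configuration \(z\) and halts as soon as it detects that \(z\) has recurred in the ITTM repeating sense --- halt on input \(z\)? Once the answer is yes, the current configuration already \emph{is} the \(\limsup\) at the next cardinal (by the definition of repeating, no cell that is \(0\) there ever shows a \(1\) in the skipped tail), so the machine simply relabels the state as \textit{cardinal} and continues; limit cardinal stages are then handled implicitly by the simulating machine's own \(\limsup\) rule, since every \(1\) occurring in a segment already occurs in its simulated initial part. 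Your route instead tracks only the configurations \(C_\alpha\) and obtains \(C_{\alpha+1}\) by asking, for each cell, whether it is eventually \(0\) --- a stabilization-type question rather than a halting question --- so you must import the reduction \(S\le_\infty 0^\blacktriangledown\) as a known black box, as you acknowledge. That is legitimate for the theorem as stated, but it forfeits the payoff advertised in the footnote: the paper's argument yields \(S\le_\infty 0^\blacktriangledown\) as a byproduct precisely because its only oracle calls are genuine halting queries about \(p_*\), whereas under your argument that corollary would be circular. On the other hand, your explicit flashing-flag treatment of limit cardinal stages and your observation that the forward direction must be robust to cardinal states firing mid-loop address points the paper passes over in silence, and both of your mechanisms for them are sound.
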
 

\begin{proof}
We have already seen in Proposition \ref{prop:halting decidable} that \(0^\blacktriangledown\) is decidable by a CRITTM, so any computation of an ITTM with a \(0^\blacktriangledown\) oracle can be simulated
on a CRITTM. We must now show that the converse holds.

Consider a CRITTM computation \(\varphi_p(x)\). By Proposition \ref{prop:repeating time} this computation halts or begins repeating in the CRITTM sense by time \(\aleph_{\omega_1}\). However, in each time
interval \([\aleph_\alpha,\aleph_{\alpha+1})\) the computation is in effect an ITTM computation and thus begins repeating in the ITTM sense by time \(\aleph_\alpha+\beta_\alpha\)
for some countable \(\beta_\alpha\). We see that the machine performs no useful computations for longer and longer periods of time. It is this behaviour that allows us to compress
the CRITTM computation. Given a CRITTM program \(p\), we let \(\tilde{p}\) be the program \(p\) with all mention of the \textit{cardinal} state omitted (transforming it into an
ITTM program). We then define \(p_*\) as the following ITTM program: given an input \(z\), decode it into the contents of the three machine tapes, the machine state and the head
position and run \(\tilde{p}\) on this decoded configuration.
By setting aside space to store information on which cells have changed
value since the start of the simulation and at each step comparing the current simulated
configuration to the one coded by \(z\), the program \(p_*\) can halt if the simulation of
\(\tilde{p}\) repeats the configuration coded by \(z\) in the ITTM sense.

Consider the following program for an ITTM with a \(0^\blacktriangledown\) oracle which simulates the CRITTM computation \(\varphi_p(x)\). First, construct the program \(p_*\). At each
subsequent step, code the simulated configuration into a real \(z\) and query the oracle whether \(\varphi_{p_*}(z)\) halts. If not, simulate a step of \(p\). Otherwise, 
\(\varphi_p(x)\) has reached its ITTM repeating configuration between cardinal stages, which must also be the configuration at the next cardinal time. We can therefore jump ahead in our
simulation, setting its state to \textit{cardinal} and moving the head back to the beginning of the tape, after which we continue with our procedure. Clearly, if \(\varphi_p(x)\)
halts, this simulation will halt with the same output.
\qed
\end{proof}

\section{Clockable and Writable Ordinals}

In this section we wish to examine the various classes of ordinals connected with CRITTM computation. In particular, we are interested in CRITTM-clockable
and (eventually/accidentally) CRITTM-writable ordinals, where these concepts are defined analogously to the ITTM case. In the sequel, when we write clockable/writable, we
always mean ITTM-clockable/writable. 

Let us first present some examples regarding CRITTM-clockable ordinals.
\begin{itemize}
\item The countable CRITTM-clockable ordinals are precisely the clockable ordinals. Clearly all clockable ordinals are CRITTM-clockable. In fact, since we stipulated that the 
\textit{cardinal} state is only used at uncountable stages, the very same program that clocks \(\alpha\) on an ITTM can be used to clock it on a CRITTM. Furthermore, no new
clockable ordinals appear since at countable stages CRITTMs behave no differently to ITTMs.

%
%

\item If \(\aleph_\alpha\) and \(\aleph_\beta\) are CRITTM-clockable then so are \(\aleph_{\alpha+\beta}\) and \(\aleph_{\alpha\cdot\beta}\). We only give a sketch of the algorithm
for the first part, leaving the second to the reader. If \(\aleph_\alpha\) and \(\aleph_\beta\) are CRITTM-clockable, we can modify the algorithms clocking them to only use the even- 
and odd-numbered cells on the tape, respectively, without changing the running time. We then design an algorithm which uses these modified programs to first clock \(\aleph_\alpha\)
and then run the program for clocking \(\aleph_\beta\), keeping track of which part of the computation we are executing by means of a master flag. This clocks 
\(\aleph_{\alpha+\beta}\).

\end{itemize}

There is a very important observation to be made regarding the last bullet point above. The observant reader will have noticed the awkward wording in the algorithm given; we are 
referring to the phrase ``run the program for clocking \(\aleph_\beta\)''. Why not simply say ``clock \(\aleph_\beta\)''? We feel that the issue is most easily seen through an example: 
knowing the algorithm for clocking \(\omega_1\), can we devise an algorithm for clocking \(\omega_1+\omega_1\)? Surely we can. Just run two copies of the program clocking \(\omega_1\)
one after another. But does this actually clock \(\omega_1+\omega_1\)? Let us take a closer look and fix the program clocking \(\omega_1\) to be the program which waits for the 
first \textit{cardinal} state that appears and then immediately halts. Consider what happens when we run two copies of this program in succession. The first copy waits for the
\textit{cardinal} state, which appears at time \(\omega_1\), and passes control to the second copy. The second copy then waits for the \textit{cardinal} state. However, this
state doesn't occur at time \(\omega_1+\omega_1\), but only at \(\omega_2\). 
We remark that this
phenomenon doesn't occur in ITTM computation due to a certain homogeneity the class of cardinals lacks; the next limit ordinal above an ordinal \(\alpha\) is always
\(\alpha+\omega\), but the next cardinal above \(\alpha\) has no such uniform description. It could, perhaps, be argued that this issue makes the notion of the length
of a CRITTM computation meaningless, as this quantity changes based on the time at which we run the computation.

The discussion in the previous paragraph leads us to formulate the following theorem, which places strong restrictions on decompositions of CRITTM-clockable ordinals.

\begin{theorem}
\label{thm:fast clock}
Let \(\alpha\) and \(\beta\) be ordinals with \(|\alpha|\geq|\beta|\). 
If \(\alpha+\beta\) is CRITTM-clockable then \(\beta\) is countable.
\end{theorem}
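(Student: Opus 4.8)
The plan is to argue by contradiction. Suppose that $\alpha+\beta$ is CRITTM-clockable, witnessed by a program that halts with empty input exactly at stage $\alpha+\beta$, and suppose toward a contradiction that $\beta$ is uncountable. Since $|\alpha|\geq|\beta|$, both $\alpha$ and $\beta$ are then uncountable, and the standard formula for the cardinality of an ordinal sum gives $|\alpha+\beta|=\max(|\alpha|,|\beta|)=|\alpha|$. The strategy is to locate the halting stage relative to the cardinal stages traversed by the computation and extract a contradiction from ordinal arithmetic.

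First I would isolate the last cardinal stage before halting. Let $\aleph_\gamma$ be the largest cardinal with $\aleph_\gamma\leq\alpha+\beta$, and write $\alpha+\beta=\aleph_\gamma+\eta$. The key input is the inter-cardinal behaviour already exploited in the proof of the equivalence theorem: on the interval $[\aleph_\gamma,\aleph_{\gamma+1})$ the CRITTM runs as an ordinary ITTM, and hence halts or begins repeating within countably many steps of $\aleph_\gamma$. Since the machine genuinely halts at $\alpha+\beta$, which lies in this interval, the tail $\eta$ must be countable. (Halting cannot occur at the cardinal stage $\aleph_\gamma$ itself, since cardinal stages are limit stages at which no instruction is executed, so in fact $0<\eta<\omega_1$; this point is convenient but not essential.)

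The heart of the argument is a cardinality computation applied to the identity $\alpha+\beta=\aleph_\gamma+\eta$. The left-hand side has cardinality $|\alpha|$, as noted above, while the right-hand side has cardinality $\max(\aleph_\gamma,|\eta|)=\aleph_\gamma$, because $\eta$ is countable and $\aleph_\gamma\geq\aleph_1$ (as $\aleph_\gamma$ is the largest cardinal below the uncountable ordinal $\alpha+\beta$). Hence $\aleph_\gamma=|\alpha|$, and since every ordinal dominates its own cardinality, $\aleph_\gamma=|\alpha|\leq\alpha$. Writing $\alpha=\aleph_\gamma+\alpha'$ and cancelling $\aleph_\gamma$ on the left in $\aleph_\gamma+\alpha'+\beta=\aleph_\gamma+\eta$ yields $\alpha'+\beta=\eta$, whence $\beta\leq\alpha'+\beta=\eta<\omega_1$. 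This contradicts the uncountability of $\beta$, completing the proof.

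I expect the main obstacle to be conceptual rather than computational: the delicate point is recognizing that the hypothesis $|\alpha|\geq|\beta|$ is exactly what forces the final cardinal stage $\aleph_\gamma$ beneath the halting time to coincide with $|\alpha|$, so that the machine would have had to halt only countably many steps past $\aleph_\gamma=|\alpha|\leq\alpha$, leaving no room for an uncountable $\beta$. Once this is seen, the only substantive result invoked is the inter-cardinal ITTM repeating behaviour underlying Proposition~\ref{prop:repeating time}, and everything else reduces to the cardinality identities for $|\alpha+\beta|$ and $|\aleph_\gamma+\eta|$ together with left cancellation of ordinal sums, which I would state carefully but not belabour.
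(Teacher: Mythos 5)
Your proof is correct and rests on exactly the same two ingredients as the paper's: the hypothesis \(|\alpha|\geq|\beta|\) forces the last cardinal stage passed to be \(|\alpha+\beta|=|\alpha|\leq\alpha\), and beyond that stage the machine runs as an ordinary ITTM and so must halt within countably many further steps. The only cosmetic differences are that you argue by contradiction and decompose the halting time at the last cardinal stage \(\aleph_\gamma\) (transferring countability to \(\beta\) via left cancellation), whereas the paper decomposes directly at \(\alpha\) and reads off the countability of \(\beta\) as the length of the terminal ITTM computation.
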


\begin{proof}
The assumption \(|\alpha|\geq|\beta|\) may be restated as \(|\alpha+\beta|=|\alpha|\). Therefore the last cardinal stage passed in a CRITTM computation of length \(\alpha+\beta\)
is \(|\alpha|\). In particular, the \textit{cardinal} state doesn't appear during the last \(\beta\) steps of computation.

Let \(p\) be the program clocking \(\alpha+\beta\) and let \(x_\alpha\) be (a real coding) the content of the machine tapes after \(\alpha\) many steps of computation.
Since no \textit{cardinal} states appear during the last \(\beta\) many steps of the computation, we are, in effect, performing an ITTM computation with
input \(x_\alpha\) which halts after \(\beta\) many steps. As we know the halting times of ITTM computations to be countable, \(\beta\) must be countable.
\qed
\end{proof}

\begin{corollary}
CRITTM-clockable ordinals are not closed under ordinal arithmetic.
\end{corollary}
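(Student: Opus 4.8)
The plan is to exhibit a single explicit counterexample witnessing the failure of closure under ordinal addition; since addition is the most basic of the arithmetic operations, a counterexample there suffices to prove the general statement. First I would recall from the examples preceding Theorem \ref{thm:fast clock} that \(\omega_1\) is itself CRITTM-clockable: the program that idles until the first \textit{cardinal} state appears and then immediately halts runs for exactly \(\omega_1\) many steps, since the first cardinal stage encountered by a CRITTM is \(\omega_1\). This supplies a concrete CRITTM-clockable ordinal to build from.

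Next I would consider the sum \(\omega_1+\omega_1\) and argue that it fails to be CRITTM-clockable by a direct appeal to Theorem \ref{thm:fast clock}. Taking \(\alpha=\beta=\omega_1\), we have \(|\alpha|=|\beta|=\aleph_1\), so the hypothesis \(|\alpha|\geq|\beta|\) is satisfied. Were \(\omega_1+\omega_1\) CRITTM-clockable, the theorem would force \(\beta=\omega_1\) to be countable, which is absurd. Hence \(\omega_1+\omega_1\) is not CRITTM-clockable.

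Combining the two observations yields the corollary at once: the summands \(\omega_1\) and \(\omega_1\) are both CRITTM-clockable, yet their sum \(\omega_1+\omega_1\) is not, so the class of CRITTM-clockable ordinals is not closed under ordinal addition, and \emph{a fortiori} not closed under ordinal arithmetic in general. I expect there to be essentially no obstacle here, as the result is an immediate consequence of the theorem once the correct witness is named; the only point requiring any care is the verification that \(\omega_1\) is CRITTM-clockable, and this is already in hand from the preceding discussion. I would also remark, to connect the formal argument with the earlier informal motivation, that this is precisely the behaviour anticipated before the theorem, where running two copies of the \(\omega_1\)-clocking program in succession halts not at \(\omega_1+\omega_1\) but at \(\omega_2\), the genuine second cardinal stage.
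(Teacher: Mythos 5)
Your proof is correct and follows exactly the paper's argument: the counterexample is $\omega_1+\omega_1$, with $\omega_1$ CRITTM-clockable and the sum ruled out by Theorem~\ref{thm:fast clock} applied with $\alpha=\beta=\omega_1$. The extra detail you supply (verifying that $\omega_1$ is clockable via the wait-for-\textit{cardinal} program) is exactly the ``as before'' the paper alludes to.
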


\begin{proof}
Consider, as before, \(\omega_1+\omega_1\). Theorem \ref{thm:fast clock} implies that this ordinal is not CRITTM-clockable.
\qed
\end{proof}

Theorem \ref{thm:fast clock} raises some interesting questions. For example, given \(\alpha\), which countable `tails' \(\beta\) give CRITTM-clockable 
ordinals? If \(\alpha\) is CRITTM-clockable we might be inclined to say that precisely the clockable \(\beta\) arise, but this is incorrect. Keep in mind that the first part of
the computation might have produced some useful output for us to utilize during the last \(\beta\) steps. In particular, assuming \(\alpha\) is uncountable, any writable
real may be produced to serve as input to the subsequent computation. The following proposition illustrates this fact.

\begin{proposition}
The ordinal \(\omega_1+\Sigma+\omega\) is CRITTM-clockable.\footnote{This example is due to Joel David Hamkins.}
\end{proposition}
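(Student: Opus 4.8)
The plan is to exploit the fact that \(\Sigma\) is a countable ordinal: every accidentally writable real appears during some ITTM computation with empty input, and such a computation halts or begins repeating by a countable stage, so all accidentally writable ordinals, and hence their supremum \(\Sigma\), lie below \(\omega_1\). Thus \(\omega_1+\Sigma+\omega\) consists of an uncountable part with a countable tail, and Theorem~\ref{thm:fast clock} presents no obstruction. The strategy is to clock the initial segment of length \(\omega_1\) by running a universal ITTM, arrange that the \(\limsup\) delivered at the first cardinal stage encodes the terminal behaviour of that machine, and then clock a further \(\Sigma+\omega\) steps as an ordinary ITTM computation using this encoded data as input.

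For the middle segment I would rely on the standard analysis of non-halting ITTM computations. The universal machine \(U\) (simulating \(\varphi_p(0)\) for all \(p\) by dovetailing) never halts, and its configuration enters a terminal loop, first strongly repeating, in the sense of the discussion preceding Proposition~\ref{prop:repeating time}, after exactly \(\Sigma\) steps, with the loop beginning at stage \(\zeta\). Consequently, if a fresh copy of \(U\) is started at the first cardinal stage \(\omega_1\), where, the next cardinal being \(\omega_2>\omega_1+\Sigma+\omega\), the machine runs purely as an ITTM, then its first strong repeat occurs at absolute time \(\omega_1+\Sigma\). Detecting this repeat exactly when it happens and then idling for a further \(\omega\) steps before halting would clock precisely \(\omega_1+\Sigma+\omega\).

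The detection is the crux, and it is exactly here that cardinal recognition earns its keep. A bare ITTM cannot recognize the first strong repeat of \(U\) as it occurs; that impossibility is precisely why \(\Sigma\) exceeds the supremum \(\gamma\) of the clockable ordinals. The extra ingredient is harvested at the cardinal stage: by running \(U\) up to time \(\omega_1\) and reading off the \(\limsup\), we obtain the join of \(U\) over its terminal loop, and by arranging \(U\) to record its own configuration history on auxiliary tracks we can ensure that this limiting real encodes the loop-start configuration \(C_\zeta\). Storing \(C_\zeta\) at the cardinal stage and then, during the fresh post-\(\omega_1\) run, watching for \(U\) to return to \(C_\zeta\) subject to the condition that no cell flips from \(0\) to \(1\), lets us halt at the first strong repeat.

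The main obstacle I anticipate is twofold and concerns entirely this last step. First, I must verify that the real produced by the \(\limsup\) at \(\omega_1\) genuinely pins down the first strong repeat, distinguishing the true loop-start configuration from spurious earlier coincidental returns; this should follow from the characterization of \(\Sigma\) as the least strong repeat time. Second, and more delicately, I must control the timing so that the overhead of the repeat-check is a fixed amount absorbed into the final \(+\omega\). I would handle this by running the comparison concurrently on separate tracks, at no asymptotic time cost, and then deliberately inserting exactly \(\omega\) idle steps after detection, so that the total running time is exactly \(\omega_1+\Sigma+\omega\) as required.
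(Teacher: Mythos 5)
Your proposal is correct and follows essentially the same route as the paper: simulate the universal machine up to the first cardinal stage, where the limit configuration is already exactly the repeating configuration \(C_\zeta\) (no auxiliary history-recording is needed for this---Welch's theorem gives that the \(\zeta\)-configuration strongly repeats at \(\Sigma\) and hence recurs at every limit of the loop, including \(\omega_1\)), then restart the universal machine and wait for that configuration to recur at relative time \(\Sigma\), halting \(\omega\) steps later. The only cosmetic difference is that you detect the first strong repeat by tracking \(0\)-to-\(1\) flips, whereas the paper simply waits for the second occurrence of the stored configuration; both work.
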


\begin{proof}
In \cite{welch-ittmordinals:2000} it is shown that the configuration of the universal ITTM\footnote{The
universal ITTM is the machine that dovetails the simulation of all ITTM programs on empty
input. It can be used to produce a stream consisting of all accidentally writable reals.} 
at time \(\zeta\) is its repeating configuration and that it repeats for the first time at time \(\Sigma\).
This configuration also appears at time \(\omega_1\). Let our CRITTM simulate the universal ITTM until time \(\omega_1\) at which point it stores the simulated configuration and
starts a new simulation of the universal ITTM. The machine also checks at limit steps whether the current simulated configuration is the same as the stored configuration and
whether this has happened for the second time. This will occur at time \(\omega_1+\Sigma\), our machine will detect this and halt at time \(\omega_1+\Sigma+\omega\).
\qed
\end{proof}

Another question we might ask is how CRITTM-clockability propagates. It is easily shown that \(\alpha^+\), the cardinal successor of \(\alpha\), is CRITTM-clockable if \(\alpha\) is.
Does it also inherit to cardinals in reverse? That is to say, if \(\alpha\) is CRITTM-clockable, must \(|\alpha|\) be as well? The following theorem shows that this fails in the
most spectacular way possible.

\begin{theorem}
There is a CRITTM-nonclockable cardinal \(\kappa\) such that the ordinal \(\kappa+\omega\) is CRITTM-clockable.
\end{theorem}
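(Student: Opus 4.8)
The plan is to produce \(\kappa\) as the cardinal-stage analogue of the ordinal \(\Sigma\) behind the example \(\omega_1+\Sigma+\omega\), to clock \(\kappa+\omega\) by detecting a repetition of a stored configuration exactly as in that example, and to argue separately that the bare cardinal stage \(\kappa\) cannot be clocked. The governing principle is that clocking a cardinal \emph{exactly} is hard: by the CRITTM rules a computation halting at a cardinal time \(\kappa\) must halt the instant it is forced into the \textit{cardinal} state there, so its halting is decided by the limsup configuration present at \(\kappa\) and cannot use any further computation; granting \(\omega\) extra steps, by contrast, is just enough time to run a comparison of configurations.

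First I would examine a CRITTM only through the configurations it displays at cardinal stages. By Proposition \ref{prop:repeating time} the activity inside each interval \([\aleph_\beta,\aleph_{\beta+1})\) is an ordinary ITTM computation that settles after countably many steps, so the configuration at \(\aleph_{\beta+1}\) is a function of the one at \(\aleph_\beta\), while limit cardinal stages are handled by the limsup rule. This ``sampled'' computation is itself infinitary and deterministic, and I would transfer Welch's analysis of the universal machine from \cite{welch-ittmordinals:2000} to it, obtaining cardinal-stage analogues \(\lambda^{c}<\zeta^{c}<\Sigma^{c}\) of the writable, eventually writable and accidentally writable ordinals. In particular, the sampled configuration of the universal CRITTM reaches its eventual, repeating value at the cardinal stage indexed by \(\zeta^{c}\) and first repeats that value at the stage indexed by \(\Sigma^{c}\). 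I then set \(\kappa=\aleph_{\Sigma^{c}}\), so that the universal CRITTM displays the same configuration at the cardinal stage \(\kappa\) as at the earlier cardinal stage \(\kappa_0=\aleph_{\zeta^{c}}\).

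To clock \(\kappa+\omega\) I would copy the proof that \(\omega_1+\Sigma+\omega\) is clockable. Using the analogue of Welch's observation that the repeating configuration is already present early (at the first cardinal stage), the machine stores that configuration and thereafter, at each later cardinal stage, compares the current configuration with the stored one. Each comparison is a scan of two reals, completed in \(\omega\) steps and resolved at the ensuing limit, after which the simulation resumes if no match was found. The stored configuration recurs for the first time exactly at \(\kappa\); the comparison begun there succeeds and concludes \(\omega\) steps later, so the machine halts at \(\kappa+\omega\) and, since the recurrence is first here, at no earlier time. As \(|\kappa+\omega|=\kappa\), this already exhibits the failure of clockability to pass down to cardinalities announced before the theorem.

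The hard part, which carries essentially all the weight, is to show that \(\kappa\) is not clockable. Suppose a program \(q\) halted at exactly \(\kappa\). By the principle above, \(q\) halts upon entering the \textit{cardinal} state at \(\kappa\), so its halting is a function of the limsup configuration there, and this trigger must fail at every earlier cardinal stage. Since the universal CRITTM dovetails \(q\) and displays identical configurations at \(\kappa_0\) and at \(\kappa\), the entire status of the simulated \(q\) --- in particular whether it has halted --- is coded identically at both stages; so if \(q\) were halted by stage \(\kappa\) it would already appear halted at \(\kappa_0<\kappa\), forcing a strictly earlier halt and a contradiction. Turning this into a genuine impossibility proof is precisely the main obstacle: one must bound the discrepancy between the real running time of \(q\) and the slower time at which the dovetailing universal machine simulates it, and thereby show that the indices of clockable cardinals are bounded by \(\zeta^{c}\), so that no clock can separate \(\kappa=\aleph_{\Sigma^{c}}\) from the earlier stage \(\aleph_{\zeta^{c}}\) that carries the same sampled configuration. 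This is the CRITTM reflection of the ITTM fact that the clockable ordinals lie well below \(\Sigma\), and it is exactly what forbids clocking \(\kappa\) while still permitting the \(\omega\)-delayed clock for \(\kappa+\omega\).
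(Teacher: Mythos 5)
Your choice of \(\kappa\) is the fatal problem: \(\kappa=\aleph_{\Sigma^{c}}\) (your \(\Sigma^{c}\) is, once the sampled-computation analysis is carried out, the ordinal \(\Sigma_C\) appearing in Lemma \ref{lemma:gen welch}) is too large for \(\kappa+\omega\) to be CRITTM-clockable. One can prove that \(\gamma_C\), the supremum of the CRITTM-clockable ordinals, equals \(\aleph_{\lambda_C}\), and since \(\lambda_C<\zeta_C<\Sigma_C\) we get \(\aleph_{\Sigma_C}+\omega>\aleph_{\lambda_C}=\gamma_C\); the ordinal you are trying to clock therefore lies strictly above the supremum of the clockable ordinals, so no algorithm can succeed. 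The concrete step at which your clocking construction breaks is the transfer of Welch's observation: in the ITTM example the \(\zeta\)-configuration of the universal machine is already present at time \(\omega_1\) because \(\omega_1\) lies past all of the countable action, but for the cardinal-sampled computation the analogous statement is that the \(\aleph_{\zeta^{c}}\)-configuration appears at the \(\omega_1\)-st cardinal stage \(\aleph_{\omega_1}\), which is far \emph{above} \(\aleph_{\Sigma^{c}}\). At the first cardinal stage \(\omega_1\) the sampled computation has taken only one step, so your machine has no access to the configuration it is supposed to store, and there is no \(\omega\)-step test available at stage \(\aleph_{\Sigma^{c}}\) that recognizes ``this configuration has occurred at some earlier cardinal stage'' among the infinitely many earlier cardinal-stage snapshots. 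Your non-clockability half is likewise only a sketch (you flag the simulation-lag issue yourself), although that half at least aims at a true fact.

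The statement is proved much more cheaply. Take \(\kappa\) to be the \emph{least} CRITTM-nonclockable cardinal, which exists because all computations halt or repeat before \(\aleph_{\omega_1}\) by Proposition \ref{prop:repeating time} while there are only countably many programs. Nonclockability of \(\kappa\) is then free, and \(\kappa+\omega\) is clocked by dovetailing all CRITTM programs on input 0, arranged so that at each cardinal stage \(\mu\) exactly \(\mu\) steps of each program have been simulated; at each cardinal stage the machine spends \(\omega\) steps checking whether some simulated program halts right there, continuing the simulation if so and halting if not. The first cardinal stage at which this check fails is \(\kappa\), and the machine halts at \(\kappa+\omega\). No Welch-style analysis is needed at all; the ``least counterexample plus universal simulation'' trick does all the work.
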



\begin{proof}
Let \(\kappa\) be the first CRITTM-nonclockable cardinal. To show that \(\kappa+\omega\) is CRITTM-clockable, dovetail the simulations of all CRITTM programs on input 0. We can
perform this in such a way that after \(\lambda\) many steps, for \(\lambda\) a cardinal, we have simulated exactly \(\lambda\) many steps of each computation. When a
\textit{cardinal} state is reached we use the next \(\omega\) many steps to check whether any of the simulated computations halt at the next step. If a computation halts we continue
with the simulation. Otherwise, we've happened upon the first CRITTM-nonclockable cardinal \(\kappa\) and we halt, having clocked \(\kappa+\omega\).
\qed
\end{proof}

The theorem shows that the fact that \(\kappa^+\) is CRITTM-clockable doesn't
imply that \(\kappa\) itself is CRITTM-clockable. The proof is merely the trivial observation that we have shown that \(\kappa+\omega\) is CRITTM-clockable, where \(\kappa\)
is the least CRITTM-nonclockable cardinal, hence \(\kappa^+\) is CRITTM-clockable.
This is in stark contrast to the speed-up theorem of \cite{hamkinslewis-ittm:2000}, which states that if
\(\alpha+n\) is clockable for some finite \(n\) then \(\alpha\) is clockable as well.

We now consider CRITTM-writable reals and ordinals and their more transient relatives. The same proofs as in the ITTM case show that the class of accidentally
CRITTM-writable reals properly contains the class of eventually CRITTM-writable reals which properly contains the class of CRITTM-writable reals and that the corresponding classes of ordinals
are downward closed. Also, no additional effort need go into proving that the classes of eventually CRITTM-writable and CRITTM-writable ordinals are closed under ordinal arithmetic.

In keeping with the notation from the ITTM model, we introduce the following ordinals
\begin{align*}
\lambda_C&=\sup\{\alpha;\alpha \text{ is CRITTM-writable}\}\\
\zeta_C&=\sup\{\alpha;\alpha \text{ is eventually CRITTM-writable}\}\\
\Sigma_C&=\sup\{\alpha;\alpha \text{ is accidentally CRITTM-writable}\}
\end{align*}

The same proof that shows \(\lambda<\zeta<\Sigma\) in the ITTM model can be used here to show that \(\lambda_C<\zeta_C<\Sigma_C\). We also have the following proposition.

\begin{proposition}
\(\Sigma_C\) is a countable ordinal.
\end{proposition}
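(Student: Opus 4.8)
The plan is to leverage the computational equivalence of CRITTMs with ITTMs equipped with a \(0^\blacktriangledown\) oracle, reducing the statement to a relativized form of the familiar fact that the ITTM ordinal \(\Sigma\) is countable. First I would observe that there is nothing to prove about \emph{individual} writable ordinals: ordinals are coded by reals and reals code only countable ordinals, so every accidentally CRITTM-writable ordinal is automatically countable. The real content is that these ordinals are \emph{bounded} below \(\omega_1\), i.e.\ that their supremum does not reach \(\omega_1\). I would stress that Proposition~\ref{prop:repeating time} alone is insufficient here: it only guarantees that all accidentally CRITTM-writable reals appear before the universal CRITTM begins repeating, at some stage below \(\aleph_{\omega_1}\), and across such a long computation uncountably many reals each coding a countable ordinal could in principle appear, with supremum as large as \(\omega_1\). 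Thus the ordinals themselves, not merely their number, must be controlled.

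Next I would set up the relativized theory. The standard proof that every ITTM computation halts or begins repeating before \(\omega_1\) is purely combinatorial: it concerns the \(\limsup\) dynamics of the cell values across the \(\omega_1\) many limit stages below \(\omega_1\) and is entirely insensitive to the presence of an oracle. Relativizing to the oracle \(0^\blacktriangledown\), the universal \(0^\blacktriangledown\)-oracle ITTM begins repeating at some countable stage \(\rho<\omega_1\). Consequently only countably many reals appear on its tape, every accidentally \(0^\blacktriangledown\)-writable real is among them, and there are only countably many accidentally \(0^\blacktriangledown\)-writable ordinals. Being countably many countable ordinals, their supremum \(\Sigma^{0^\blacktriangledown}\) is itself countable.

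It then remains to show that every accidentally CRITTM-writable real is accidentally \(0^\blacktriangledown\)-writable, whence \(\Sigma_C\le\Sigma^{0^\blacktriangledown}<\omega_1\). Suppose a real \(r\) appears at some stage \(\tau\) of a CRITTM computation \(\varphi_p(0)\), and fix \(\alpha\) with \(\tau\in[\aleph_\alpha,\aleph_{\alpha+1})\) (if \(\tau<\omega_1\) the real is already accidentally writable in the plain ITTM sense, hence trivially accidentally \(0^\blacktriangledown\)-writable). I would invoke the compression simulation from the equivalence theorem: a \(0^\blacktriangledown\)-oracle ITTM can reproduce on its tape the CRITTM configuration \(c_\alpha\) attained at the cardinal stage \(\aleph_\alpha\), since this is exactly the data it maintains and updates at each cardinal-stage jump. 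From \(c_\alpha\) the CRITTM behaves during \([\aleph_\alpha,\aleph_{\alpha+1})\) as a plain ITTM, so after producing \(c_\alpha\) I would have the oracle machine continue by running the oracle-free program \(\tilde p\) from \(c_\alpha\) for the remaining \(\tau-\aleph_\alpha\) (countably) many steps, at which point \(r\) appears on its tape. This exhibits \(r\) as accidentally \(0^\blacktriangledown\)-writable and completes the bound.

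I expect this last step to be the main obstacle. One must argue carefully that the reals appearing during the ``dead'' intervals that the equivalence simulation skips over are nonetheless accidentally \(0^\blacktriangledown\)-writable, which is precisely what the restart-from-\(c_\alpha\) maneuver accomplishes, and one must confirm that each cardinal-stage configuration \(c_\alpha\) really is produced on the oracle machine's tape by the compression construction. The relativization in the second step, though routine, also merits an explicit remark that the repeating argument never consults the transition function and so survives the addition of the oracle verbatim.
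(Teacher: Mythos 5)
Your argument is correct in its essentials, but it takes a genuinely different route from the paper. The paper argues directly: within each interval \([\aleph_\beta,\aleph_{\beta+1})\) the computation is an ITTM computation from the configuration \(x_\beta\) at \(\aleph_\beta\) and hence repeats by the countable offset \(\delta_{x_\beta}\), so it produces only countably many reals there; since by Proposition~\ref{prop:repeating time} only countably many such intervals matter and there are countably many programs, only countably many accidentally CRITTM-writable reals exist, and a countable set of countable ordinals has countable supremum. You instead relativize the Hamkins--Lewis repeating argument to the \(0^\blacktriangledown\) oracle to get \(\Sigma^{0^\blacktriangledown}<\omega_1\), and then push every accidentally CRITTM-writable real into an oracle computation via the compression simulation. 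Both proofs ultimately rest on the same combinatorial core (the ITTM phases between consecutive cardinals repeat after countably many steps --- in your version this is hidden inside the proof of the equivalence theorem, which is what makes the compression and your restart-from-\(c_\alpha\) maneuver sound); the paper's version is shorter and self-contained, while yours yields the sharper quantitative bound \(\Sigma_C\le\Sigma^{0^\blacktriangledown}\) and correctly isolates the two genuine subtleties: that simulated reals appear only \emph{coded} on the tape (which your device of actually running \(\tilde p\) from \(c_\alpha\) fixes) and that boundedness, not cardinality of the individual ordinals, is the issue. One small slip: \(\tau-\aleph_\alpha\) need not be countable for \(\tau\in[\aleph_\alpha,\aleph_{\alpha+1})\) when \(\alpha\ge1\); this is harmless, both because accidental writability does not require the real to appear at a countable stage of the oracle computation and because the ITTM-repeating of \(\tilde p\) from \(c_\alpha\) guarantees that \(r\) in fact already appears within countably many steps of \(c_\alpha\), but the parenthetical ``(countably)'' should be justified rather than asserted.
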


\begin{proof}
Accidentally CRITTM-writable ordinals are countable by definition. We now prove that there are only countably many
accidentally CRITTM-writable reals, whence the proposition follows immediately.

Consider a CRITTM program \(p\). We have shown that \(p\) either halts or has repeated by time \(\aleph_\alpha\) for some countable \(\alpha\). 
Recall that the computation of \(p\) on input \(x\) either halts or repeats (in the ITTM sense) at least once by some countable time \(\delta_x\).
Let \(\beta<\alpha\). Within the time interval \([\aleph_\beta,\aleph_{\beta+1})\) the program \(p\) behaves just like an ITTM program and must
therefore halt or repeat by time \(\aleph_\beta+\delta_{x_\beta}\), where \(x_\beta\) codes the contents of the tapes at time \(\aleph_\beta\). This means that the program \(p\) 
in fact produces only countably many reals in the time interval \([\aleph_\beta,\aleph_{\beta+1})\). Repeating this argument for all \(\beta<\alpha\), we see that only countably many 
reals appear during the computation of \(p\) and since there are only countably many programs, there can only be countably many accidentally CRITTM-writable reals.
\qed
\end{proof}

But what is the relation between the ordinals \(\lambda_C,\zeta_C,\Sigma_C\) and their ITTM counterparts? Clearly the supremum of a given CRITTM class of ordinals is at least as big
as the supremum of the corresponding ITTM class, i.e.\ every (eventually/accidentally) writable ordinal is also (eventually/accidentally) CRITTM-writable, but can we say more?
It is in fact easy to see that every eventually writable real is CRITTM-writable: if a real \(x\) is eventually written by a program \(p\), we may use a CRITTM to simulate \(p\) for 
\(\omega_1\) many steps, at which point we halt, having written \(x\). Therefore \(\lambda_C\geq\zeta\) and we can go even further.

\begin{proposition}
\(\zeta\) is CRITTM-writable.
\end{proposition}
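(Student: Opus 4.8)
The plan is to sharpen the restart trick used in the proof that \(\omega_1+\Sigma+\omega\) is CRITTM-clockable, running an ordinal counter alongside it so that the output codes \(\zeta\) itself. I would lean on the same facts of Welch invoked there: the configuration \(C_\zeta\) of the universal ITTM at time \(\zeta\) is its repeating configuration, \(C_\zeta\) first appears at time \(\zeta\) and first recurs at time \(\Sigma\), and \(C_\zeta\) also shows up at time \(\omega_1\). The extra ingredient here is that the \emph{first} appearance of \(C_\zeta\) in a run sits at relative stage \(\zeta\); this is already implicit in the \(\omega_1+\Sigma+\omega\) argument, where the \emph{second} appearance, at relative stage \(\Sigma\), is the one being detected.

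First I would simulate the universal ITTM up to the first \emph{cardinal} state, i.e.\ up to time \(\omega_1\), and store the configuration present there; by Welch this stored configuration is exactly \(C_\zeta\). I would then restart the universal simulation from scratch on a fresh block of cells, simultaneously resetting to \(0\) an auxiliary ordinal counter. At each step I advance both the simulation and the counter, and at each limit step I compare the current simulated configuration with the stored \(C_\zeta\) (using the same equality-of-reals check already employed in the \(\omega_1+\Sigma+\omega\) argument). Since in a fresh run \(C_\zeta\) first appears at relative stage \(\zeta\), the first match occurs there; at that moment I copy the counter onto the output tape and halt. The machine thus halts at absolute time \(\omega_1+\zeta\) with a code for \(\zeta\), witnessing that \(\zeta\) is CRITTM-writable.

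The bookkeeping of the restart — keeping \(C_\zeta\) stored while a fresh simulation and a reset counter run on disjoint cell-blocks — is routine. The one delicate point, and the step I expect to carry the real weight, is the ordinal counter: I need an auxiliary routine that at every relative stage \(\eta\) leaves on its tape a real coding a well-order of type \(\eta\), with the \(\limsup\) rule producing, at each limit stage \(\mu\), a code for \(\sup_{\eta<\mu}\eta=\mu\). Such counters are standard for ITTMs (they underlie the Hamkins--Lewis fact that clockable computations have writable length), but their correctness at limit stages must be arranged with care in the coding. Because \(\zeta\) is a limit ordinal and I read the counter off precisely at the limit stage \(\zeta\), it is exactly this limit-stage correctness that guarantees the output codes \(\zeta\) rather than a truncation or an overshoot.
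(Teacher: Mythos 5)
Your reduction of the problem to the existence of a ``real-time ordinal counter'' is where the proof breaks down, and you have correctly identified it as the load-bearing step; unfortunately it does not hold up. The appeal to Hamkins--Lewis is a misattribution: their proof that every clockable ordinal is writable does not proceed by maintaining, at every stage \(\eta\), a live code for \(\eta\) on the tape, and no standard routine does so. The obvious implementation is provably impossible: for the \(\limsup\) rule to turn the codes present at stages \(\eta<\mu\) into a code for \(\mu\) at a limit stage, the natural arrangement is monotone end-extension (cells only ever switch from \(0\) to \(1\)), but then each cell changes value at most once, so the tape content can change at most \(\omega\) many times during the entire run and the counter cannot even survive to stage \(\omega\cdot 2\). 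Non-monotone codings face the limit-of-limits problem, which is exactly the difficulty that makes ``clockable implies writable'' a theorem rather than an observation. Worse, a counter correct at every limit stage cannot exist at all: by Welch's theorem the snapshot of your counting routine at (relative) stage \(\zeta\) equals its snapshot at stage \(\Sigma\), so it cannot code \(\zeta\) at the one and \(\Sigma\) at the other. You only need correctness at the single stage \(\zeta\), which this does not outright refute, but \(\zeta\) lies far beyond \(\lambda\) and you offer no mechanism for counting that far; note also that everything in your algorithm after time \(\omega_1\) is an ordinary ITTM computation, so the cardinal-recognizing power is not being used where the actual difficulty sits.

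The paper's proof avoids counting entirely and leans on the new power of the model instead: using the CRITTM-decidability of the stabilization problem (Proposition~\ref{prop:halting decidable}), it enumerates all ITTM programs, spends one cardinal stage per program to decide whether it stabilizes and, if its stable output codes an ordinal, copies that code to a reserved \(\omega\)-block of the output tape; by time \(\aleph_\omega\) every eventually writable ordinal has been collected, and the machine outputs a code for their sum, which is at least \(\zeta\). Downward closure of the CRITTM-writable ordinals then finishes the argument. If you want to salvage your line of attack, that last step is the move to borrow: you do not need to hit \(\zeta\) on the nose with a counter, only to write some ordinal \(\geq\zeta\).
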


\begin{proof}
We begin by dividing the output tape into \(\omega\) many \(\omega\)-blocks. We now proceed to enumerate all ITTM programs. When a program \(p\) is enumerated, first determine
whether \(\varphi_p(0)\) stabilizes and, if it does and if its stabilized output codes an ordinal\footnote{Checking whether a real codes an ordinal can be done using the count-through
algorithm of \cite{hamkinslewis-ittm:2000}}, copy its output to the next empty \(\omega\) block on the output tape. Having
done this, resume enumerating programs. To deal with each particular program we need to pass only a single cardinal stage. Therefore we will have copied all eventually writable
ordinals onto the output tape by time \(\aleph_\omega\), which we can recognize. At this point we use \(\omega\) many steps to combine the codes on the output tape into
a code for the sum of all eventually writable ordinals and then halt.

This CRITTM algorithm writes an ordinal which is at least as big as \(\zeta\), therefore \(\zeta\) is CRITTM-writable.
\qed
\end{proof}

We have now seen that many new ordinals become writable and eventually writable when passing from the ITTM model to the CRITTM model. But what about accidentally writable ordinals?

\begin{question}
Is \(\Sigma=\Sigma_C\)?
\end{question}

While we currently do not have an answer to this question, let us present a brief heuristic justification for why we believe the proposed equality to be true. We have seen that \(\lambda<\lambda_C\) and \(\zeta<\zeta_C\), but these inequalities
are not particularly surprising given that the concepts of (eventual) writability are intimately connected with the computational power of the particular model under observation.
Accidental writability, however, seems to be a much more robust notion. 
It is not at all clear how any real could appear during a CRITTM computation and not during an ITTM computation
since, as we have seen, CRITTM computations are more or less just ITTM computations with some irrelevant padding inserted.

%
%

We now turn to the CRITTM counterpart of \(\gamma\), the supremum of the clockable ordinals:
\[\gamma_C=\sup\{\alpha;\alpha \text{ is CRITTM-clockable}\}\]
Welch shows in \cite{welch-ittmordinals:2000} that \(\gamma=\lambda\). We intend to prove a somewhat similar statement in the context of CRITTMs.

\begin{lemma}
If \(\alpha\) is clockable then \(\aleph_\alpha\) is CRITTM-clockable.
\end{lemma}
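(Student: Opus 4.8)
The plan is to use the \emph{cardinal} state as a clock that fires exactly at the cardinal stages $\aleph_1 < \aleph_2 < \cdots$ and to run the ITTM clocking $\alpha$ at this ``cardinal speed'', performing a single simulated step at each cardinal stage and idling in between. Fix an ITTM program $q$ halting on empty input at exactly time $\alpha$ (such $q$ exists since $\alpha$ is clockable, and in particular $\alpha$ is countable). I would build a CRITTM $M$ that keeps a simulated configuration of $q$ on a block of dedicated cells and uses the remaining cells for control and scratch, aiming for the invariant that \emph{at every cardinal stage $\aleph_\xi$ with $\xi \le \alpha$ the stored configuration equals $q$'s configuration at time $\xi$}. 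The behaviour of $M$ is: on entering the \emph{cardinal} state, simulate one step of $q$ (finitely many steps of $M$), then enter an idle loop holding the stored configuration fixed until the next \emph{cardinal} state fires. The conclusion then follows from the invariant: at $\aleph_\alpha$ the stored configuration is $q$'s halting configuration, $M$ recognises this and halts, so $M$ clocks exactly $\aleph_\alpha$.

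I would prove the invariant by transfinite induction on $\xi$, the heart being the behaviour of the $\limsup$ rule at cardinal stages. For a successor-indexed cardinal $\aleph_{\eta+1}$ the point is that $\limsup$ is a \emph{tail} property: since $M$ holds the stored cells constant throughout the idle segment immediately preceding $\aleph_{\eta+1}$, the $\limsup$ of each such cell over all $t<\aleph_{\eta+1}$ is exactly its held value, namely the configuration $q$-at-time-$(\eta+1)$ computed just after $\aleph_\eta$. For a limit-indexed cardinal $\aleph_\lambda=\sup_{\eta<\lambda}\aleph_\eta$ the held values of a storage cell are cofinal in $\aleph_\lambda$, so the $\limsup$ over $t<\aleph_\lambda$ of that cell equals the $\limsup$ over $\eta<\lambda$ of the corresponding cell of $q$, which is precisely $q$'s own limit rule for that cell. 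Thus the CRITTM's $\limsup$ rule at limit cardinal stages silently reproduces $q$'s limit rule on the simulated tape, and I never have to apply $q$'s limit transition by hand for the cell contents.

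The main obstacle is making the $\limsup$ at limit cardinal stages \emph{clean}, and it splits into two points. First, the single-step simulation must never write a spurious $1$ into a storage cell that should read $0$: since the brief step-simulations recur cofinally in $\aleph_\lambda$, any stray $1$ written cofinally would corrupt the $\limsup$; this is the standard discipline of computing the next configuration in isolated scratch cells and updating each storage cell at most once, to its correct target. Second, and more delicate, $q$'s limit rule for the \emph{state} and \emph{head position} (enter the limit state, return the head to cell $0$) is \emph{not} the $\limsup$ of any natural encoding, so $M$ must recognise when a cardinal stage is limit-indexed and override the simulated state and head. I would provide a marker cell that $M$ sets to $1$ for a single step at the start of each step-simulation and then clears: its $\limsup$ at a cardinal stage is $1$ exactly when the stage is limit-indexed (the markers sit cofinally below $\aleph_\lambda$) and $0$ when it is successor-indexed (the lone marker lies far below $(\aleph_\eta)^+$). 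Reading this cell tells $M$ whether to trust the stored state and head or to reset them to $q$'s limit configuration before simulating the next step.

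Finally I would check the timing of the halt, so that $M$ stops at $\aleph_\alpha$ and not a step later: when a simulated step reveals that $q$ has entered its halt state, $M$ does not stop at once but enters the idle loop and halts on the very next \emph{cardinal} state, making its halting time the intended cardinal rather than some $\aleph_\eta+k$. The base of the induction is an initial phase on $[0,\omega_1)$, run entirely below the first cardinal stage, which installs $q$-at-time-$1$ and holds it to $\aleph_1$; since this phase lives at countable stages the \emph{cardinal} state never interferes.
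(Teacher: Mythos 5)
Your proposal is correct and is essentially the paper's own argument: both use the \emph{cardinal} state as a metronome, performing one step of a countable-length witnessing computation per cardinal stage and using a $\limsup$ marker (the standard limit-of-limits bookkeeping) to detect limit-indexed cardinals and reset state and head there. The only cosmetic difference is that the paper slows down the count-through algorithm on a written code for $\alpha$, whereas you slow down the clocking program for $\alpha$ directly; your treatment of the halting time and of spurious writes into storage cells is, if anything, more careful than the paper's.
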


In the proof we shall introduce an algorithm, based on an argument of \cite{hamkinslewis-ittm:2000}, which we call
the \emph{cardinal step count-through algorithm} and which will be very useful in what
follows.

\begin{proof}
Since \(\alpha\) is clockable, we can
write a real coding it in a countable number of steps. We now perform the count-through algorithm\footnote{That is, we use the improved version of the algorithm, which counts through a code for \(\alpha\) in \(\alpha\) many steps.} 
of \cite{hamkinslewis-ittm:2000} on this code, but with a small modification: after each step of the algorithm our machine records the current head position and state in some way and
waits for the next \textit{cardinal} state, at which point it decodes the previous configuration and continues the algorithm. Of course, this modification requires keeping track
of limit cardinal stages, when all previous information should be discarded, but this can be dealt with using the same bookkeeping devices that are used to keep track of limit-of-limits
stages in ITTM computations. This CRITTM algorithm clocks \(\aleph_\alpha\).
\qed
\end{proof}

Based on this lemma we can state that \(\gamma_C\geq \aleph_\gamma\). While the possibility of having \(\gamma_C=\aleph_\gamma\) is certainly alluring, this equality does not
hold. We can easily use the cardinal step count-through algorithm to show that for each CRITTM-writable ordinal \(\alpha\) there exists a CRITTM-clockable ordinal at least as large as 
\(\aleph_\alpha\). Therefore we must have \(\gamma_C\geq \aleph_{\lambda_C} > \aleph_\gamma\).

%
%

Since there is a CRITTM-clockable cardinal above any CRITTM-clockable ordinal (the cardinal
successor works), \(\gamma_C\) must be a cardinal. We now proceed to determine exactly which cardinal it is.

\begin{lemma}
\label{lemma:gen welch}
Every CRITTM computation with input 0 repeats its \(\aleph_{\zeta_C}\) configuration at \(\aleph_{\Sigma_C}\).
\end{lemma}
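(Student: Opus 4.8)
The plan is to adapt the \((\zeta,\Sigma)\)-analysis of \cite{welch-ittmordinals:2000} to the \emph{cardinal-stage} behaviour of CRITTMs, and to transfer the conclusion from the universal CRITTM to arbitrary computations. First I would reduce to the universal CRITTM \(U\), dovetailing all CRITTM programs on input \(0\) in the synchronised manner used earlier (so that at each cardinal stage \(\aleph_\xi\) every simulated computation has itself been run for exactly \(\aleph_\xi\) steps). With this synchronisation the cardinal-stage configuration of any computation on input \(0\) is a recoverable sub-block of the configuration of \(U\), and a strong repetition of \(U\) between the cardinal stages \(\aleph_{\zeta_C}\) and \(\aleph_{\Sigma_C}\) restricts to a strong repetition of each individual computation (a sub-block of a strong repeat is again one). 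Hence it suffices to establish the lemma for \(U\) itself.

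Next I would analyse the sequence \((c_\xi)\) of configurations of \(U\) at the cardinal stages \(\aleph_\xi\), the crucial point being that this sequence is governed by an ITTM-like dynamics in which the \emph{index} \(\xi\) plays the role of time. At a successor index, \(c_{\xi+1}\) is the \(\limsup\)-configuration produced by the ordinary ITTM computation that \(U\) carries out on the interval \([\aleph_\xi,\aleph_{\xi+1})\); by the argument of Proposition \ref{prop:repeating time} this interval computation settles at a countable time, so \(c_{\xi+1}\) is a definite function of \(c_\xi\). At a limit index \(\xi\) the \(\limsup\) rule at time \(\aleph_\xi\) yields \(c_\xi(i)=\limsup_{\eta<\xi}a_\eta(i)\), where \(a_\eta(i)\) records whether cell \(i\) ever showed a \(1\) anywhere in the interval \([\aleph_\eta,\aleph_{\eta+1})\) — \emph{not} merely at the snapshots \(c_\eta\). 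Since the \(c_\xi\) are reals and the limit rule is a \(\limsup\), the boundedness argument behind Proposition \ref{prop:repeating time} forces \((c_\xi)\) to be eventually periodic, so it has a well-defined first recurrence.

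Finally I would run Welch's argument on this dynamics intrinsically. The loop of \((c_\xi)\) begins at the least index from which every eventually-constant cell is already constant and no further recurring interval-activity can switch a \(0\) to a \(1\), and it closes at the first index at which the configuration exactly recurs. It then remains to match these two endpoints to \(\zeta_C\) and \(\Sigma_C\) through the expected equivalences: a real is eventually CRITTM-writable exactly when it is eventually stable in \((c_\xi)\), and accidentally CRITTM-writable exactly when it appears in some interval strictly before the first recurrence (such reals include everything appearing at non-cardinal stages, since those are produced during the simulated interval runs). As in the ITTM case the ordinals coded in each way form initial segments, whose suprema are by definition \(\zeta_C\) and \(\Sigma_C\); these are therefore the loop-start and loop-close indices, giving \(c_{\zeta_C}=c_{\Sigma_C}\) together with strong repetition in between.

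The hard part is precisely this endpoint identification, together with the limit-stage bookkeeping. The delicate point is that the \(\limsup\) at a limit cardinal is taken over the entire preceding interval structure, so one must track the interval-activity \(a_\eta\) rather than the snapshots \(c_\eta\); and one must verify that CRITTM eventual and accidental writability coincide \emph{exactly} with stability and appearance in \((c_\xi)\). I would stress that this re-derivation cannot be replaced by a black-box reduction to the universal ITTM: such a reduction reindexes time and would land on the ITTM constants \(\zeta,\Sigma\), whereas we already know \(\zeta<\zeta_C\), so the loop endpoints genuinely have to be recomputed at the cardinal-stage scale.
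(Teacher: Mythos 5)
The paper gives no proof of this lemma to compare against: the proof is explicitly omitted, with only the remark that it is a straightforward generalization of the arguments of \cite{welch-ittmordinals:2000}. Your proposal is exactly that generalization, and the scaffolding is sound: the synchronized dovetailing (which the paper itself uses in the \(\kappa+\omega\) clocking argument) legitimately reduces the claim to the universal CRITTM; the observation that the \(\limsup\) at a limit cardinal is taken over entire preceding intervals rather than over the snapshots \(c_\eta\) is a genuine subtlety, correctly handled by tracking the interval activity \(a_\eta\); and eventual periodicity of \((c_\xi)\) does follow from the mechanism behind Proposition \ref{prop:repeating time}.

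The gap is in the final step, which you rightly flag as the hard part but then dispose of too quickly. The sentence ``whose suprema are by definition \(\zeta_C\) and \(\Sigma_C\); these are therefore the loop-start and loop-close indices'' is not an argument: \(\zeta_C\) and \(\Sigma_C\) are defined as suprema over all CRITTM programs, and the claim that these suprema coincide with the indices at which the cardinal-stage dynamics of the universal machine enters and first closes its loop is precisely the content of the lemma, not a consequence of the definitions. In Welch's original setting the corresponding identification of the loop endpoints with \(\zeta\) and \(\Sigma\) is the theorem, and it rests on several nontrivial facts proved separately: the repeating snapshot is itself eventually writable (by a guess-and-update simulation), every eventually writable real is stably coded in it, ordinals cofinal in the loop-close time are accidentally writable, and conversely. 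Each of these needs a CRITTM analogue stated and proved at the cardinal-stage scale --- for instance, that the repeating cardinal-snapshot is eventually CRITTM-writable by a machine that revises its guess only at \emph{cardinal} states, and that ordinals cofinal in the loop-close index are accidentally CRITTM-writable --- before the endpoints can be named. Since the paper suppresses exactly this part as ``slightly technical,'' your outline matches the paper's intended route but stops at the same place the paper does, and the one justification you do offer for the crux is circular.
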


In the interest of brevity we omit the slightly technical proof. Let us just remark that
the proof is a straightforward generalization of the arguments in \cite{welch-ittmordinals:2000} to the
present context.

\begin{theorem}
\(\gamma_C=\aleph_{\lambda_C}\)
\end{theorem}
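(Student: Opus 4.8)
The plan is to prove the nontrivial inequality \(\gamma_C\leq\aleph_{\lambda_C}\), since the reverse inequality \(\gamma_C\geq\aleph_{\lambda_C}\) has already been established via the cardinal step count-through algorithm. It suffices to show that every CRITTM-clockable ordinal \(\delta\) satisfies \(\delta<\aleph_{\lambda_C}\). The first step is to normalise \(\delta\) using Theorem \ref{thm:fast clock}. Writing \(\aleph_\mu=|\delta|\) for the cardinality of \(\delta\) and using ordinal subtraction, I would express \(\delta=\aleph_\mu+\beta\); since \(\delta<\aleph_{\mu+1}\) we have \(|\beta|\leq\aleph_\mu\), so that \(|\aleph_\mu|\geq|\beta|\) and Theorem \ref{thm:fast clock} applies, forcing \(\beta\) to be countable. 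Thus a CRITTM-clockable ordinal is always of the form \(\aleph_\mu+\beta\) with a countable tail, and the last cardinal stage visited by the clocking computation is exactly \(\aleph_\mu\). Consequently the entire problem reduces to bounding the index \(\mu\) (the case of countable \(\delta\), which passes no cardinal stage at all, being immediate).

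The key step is to show that \(\mu\) is bounded by a CRITTM-writable ordinal, from which \(\mu<\lambda_C\) follows (the supremum \(\lambda_C\) is not itself CRITTM-writable, for otherwise \(\lambda_C+1\) would be as well). To see this, let \(p\) clock \(\delta\) and run \(p\) on empty input alongside an auxiliary ordinal counter kept on a reserved part of the tapes. Each time the \textit{cardinal} state is entered the counter is advanced by one, and between cardinal stages it is left untouched; when \(p\) halts I copy the current counter value to the output tape and halt. Since the cardinal stages visited before \(p\) halts are exactly \(\aleph_1,\dots,\aleph_\mu\), the counter holds a code for an ordinal at least \(\mu\), and this ordinal is therefore CRITTM-writable, giving \(\mu<\lambda_C\). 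Combining this with the decomposition, \(\delta=\aleph_\mu+\beta<\aleph_{\mu+1}\leq\aleph_{\lambda_C}\), and taking the supremum over all CRITTM-clockable \(\delta\) yields \(\gamma_C\leq\aleph_{\lambda_C}\), completing the proof.

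The main obstacle is the faithful maintenance of the ordinal counter across \emph{limit} cardinal stages. At such a stage \(\aleph_\eta\) the counter must represent the supremum of its previous values, yet the tape is available only as the \(\limsup\) of its contents below \(\aleph_\eta\), with the head reset and the machine in the \textit{cardinal} state. This is precisely the bookkeeping difficulty already solved by the cardinal step count-through algorithm: one arranges the code so that its \(\limsup\) at a limit cardinal stage decodes to the intended limit ordinal, discarding stale information at limit-of-limits stages exactly as in the ITTM treatment of limit-of-limits ordinals, so I would reuse that machinery verbatim for the counter. I note finally that Lemma \ref{lemma:gen welch} on its own only yields the coarser bound \(\gamma_C\leq\aleph_{\zeta_C}\), since it locates the universal machine's loop at \(\aleph_{\zeta_C}\) while \(\lambda_C<\zeta_C\); it is the counting argument above, the CRITTM analogue of the fact that clockable ITTM ordinals are writable, that sharpens this to the exact value \(\aleph_{\lambda_C}\).
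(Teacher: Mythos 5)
Your overall strategy---normalising a CRITTM-clockable $\delta=\aleph_\mu+\beta$ via Theorem \ref{thm:fast clock} and then reducing everything to bounding the index $\mu$ by a CRITTM-writable ordinal---is sound, and matches the shape of the paper's argument. But the key step, the ordinal counter that is ``advanced by one'' at each \textit{cardinal} state and stays $\limsup$-coherent at limit cardinal stages, has a genuine gap, and it is exactly the hard part of the theorem. The cardinal step count-through algorithm, which you propose to reuse verbatim, solves the opposite and much easier problem: it counts \emph{down} through a code for an ordinal that is \emph{given in advance}, so the bookkeeping consists of monotonically marking elements of a fixed domain as deleted, and the $\limsup$ at limit stages automatically recovers the deleted set. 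Counting \emph{up} with no code given in advance forces you to allocate fresh natural numbers as new maxima of the coded well-order at each of $\mu$ many stages; a greedy allocation exhausts $\omega$ after $\omega$ many increments, and any non-greedy scheme requires the machine to already know, at each limit cardinal stage, ordinal information about how many allocation rounds have occurred---which is precisely the counter you are trying to build. This is the same obstruction that makes ``every clockable ordinal is writable'' ($\gamma=\lambda$) a nontrivial theorem of Welch rather than an immediate counter argument, and your proposal does not overcome it.

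The paper's proof circumvents this by using the ``coarser'' bound you set aside at the end. Lemma \ref{lemma:gen welch} gives $\gamma_C\leq\aleph_{\zeta_C}$, which guarantees that \emph{some accidentally CRITTM-writable ordinal exceeds $\mu$}. One then enumerates accidentally CRITTM-writable ordinals and, for each candidate $\alpha$, runs the cardinal step count-through algorithm on a given code for $\alpha$ to pace the simulation of $p$ up to $\aleph_\alpha$, while tracking the deleted initial segment $\beta$ of that code; when the simulation is seen to halt, one outputs $\beta+1$, which is CRITTM-writable and bounds $\mu$. The pre-given code for $\alpha$ supplies the address space that makes the tracking $\limsup$-coherent, and the coarse bound from Lemma \ref{lemma:gen welch} is what guarantees the enumeration eventually succeeds. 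So that lemma is not a dispensable weaker estimate but an essential ingredient; to repair your argument you would need either to import it in this way or to supply a genuinely new mechanism for coherent counting up.
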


\begin{proof}
We have already seen that \(\gamma_C\geq\aleph_{\lambda_C}\).
Lemma \ref{lemma:gen welch} implies that every halting CRITTM computation with input 0 must halt before time \(\aleph_{\zeta_C}\). 
We therefore have \(\gamma_C\leq \aleph_{\zeta_C}\).
Consider a halting CRITTM program \(p\). Begin enumerating accidentally CRITTM-writable ordinals \(\alpha\). For each \(\alpha\) use the cardinal step count-through algorithm
to simulate the computation of \(\varphi_p(0)\) up to \(\aleph_\alpha\), while simultaneously keeping track of the deleted initial segment \(\beta\) of \(\alpha\). 
Eventually an \(\alpha\) will be enumerated which is large enough that the simulation halts before time \(\aleph_\alpha\). When this happens, we halt with output \(\beta+1\). 
Therefore the program \(p\) halts by time \(\aleph_{\beta'}\) for some writable \(\beta'\), which means that \(\gamma_C\leq \aleph_{\lambda_C}\). Putting this together, we have shown
that \(\gamma_C=\aleph_{\lambda_C}\).
\qed
\end{proof}

Upon reflection, the properties of CRITTMs explored in this paper require very little of
the structure of the class of cardinals. One could equally well have considered
\emph{\(A\)-recognizing} ITTMs for some club class of limit ordinals \(A\). For example,
we could have considered ITTMs with a state recognizing ordinal multiples of \(\Sigma\).
Some of our results would generalize, but note that we have often used the result
that any ITTM computation halts or repeats before \(\aleph_1\) and this property holds
with \(\Sigma\) only for ITTM computations with empty input, so we cannot expect to get
an equivalent model.

The generalization to arbitrary \(A\) leads to some familiar results (e.g.\ any computation in this model either halts or begins repeating before the \(\omega_1\)-st element of \(A\)), but
it also has serious issues. Since the class \(A\) may lack the uniformity of the classes
of limit ordinals or cardinals, even the existence of a universal machine is not clear (if
there is a change in the frequency of the \(A\)-stages and the machine cannot anticipate this,
a running simulation may not produce correct results).

\begin{question}
What properties should the club class \(A\) have to get a meaningful notion of \(A\)-recognizing ITTMs?
\end{question}

Instead of studying them in isolation, we can also compare the \(A\)-recognizing
ITTM models for various \(A\). This leads to the concept of reductions between them. 
For example, any \(A\)-recognizing ITTM can
simulate a limit ordinal recognizing ITTM (which is just an ordinary ITTM).
This endows the collection of club classes of ordinals with a reducibility degree structure, 
similar to but distinct from the usual Turing degrees.

\begin{question}
What are the degrees thus obtained?
\end{question}

\subsubsection{Acknowledgements.}
The author wishes to thank Joel David Hamkins for the many fruitful discussions and
his suggestions of improvements to the paper.

The author's work was supported in part by the Slovene Human Resources and Scholarship Fund.

\bibliographystyle{splncs}

\end{document}